\pgfplotsset{compat=newest}
\newcommand{\1}{\mathbbm{1}}
\newcommand{\J}{\mathbb{J}}
\newcommand{\expected}{\mathbb{E}}
\DeclareMathOperator{\Tr}{Tr}
\newcommand{\indicator}[1]{1_{ #1 }\,}
\newcommand{\cin}{{c_{\mathrm{in}}}}
\newcommand{\cout}{{c_{\mathrm{out}}}}
\newcommand{\ein}{{m_{\mathrm{in}}}}
\newcommand{\eout}{{m_{\mathrm{out}}}}
\newcommand{\mein}{{\overline{m}_{\mathrm{in}}}}
\newcommand{\meout}{{\overline{m}_{\mathrm{out}}}}
\newcommand{\mm}{\overline{m}}
\renewcommand{\P}{\mathbb{P}}
\newcommand{\Q}{\mathbb{Q}}
\newcommand{\e}{\mathrm{e}}
\newcommand{\ER}{Erd\H{o}s-R\'{e}nyi}
\newcommand{\SBM}{\text{SBM}}
\newcommand{\connectivity}{\gamma}
\newcommand{\overlap}{\alpha}
\newcommand{\dcond}{d_{\mathrm{c}}}
\newcommand{\dcondupper}{d_{\mathrm{c}}^{\mathrm{upper}}}
\newcommand{\dcondlower}{d_{\mathrm{c}}^{\mathrm{lower}}}
\newcommand{\avgdeg}{d}
\newcommand{\Bin}{\mathrm{Bin}}
\newtheorem{theorem}{Theorem}
\newtheorem{corollary}{Corollary}
\newtheorem{lemma}{Lemma}
\title[Thresholds for sparse community detection]{Information-theoretic thresholds for community detection in sparse networks}
\author{Jess Banks \& Cristopher Moore}
\address{Santa Fe Institute, 1399 Hyde Park Road, Santa Fe NM 87501}
\begin{document}
\maketitle

\begin{abstract}
We give upper and lower bounds on the information-theoretic threshold for community detection in the stochastic block model.  Specifically, let $k$ be the number of groups, $d$ be the average degree, the probability of edges between vertices within and between groups be $\cin/n$ and $\cout/n$ respectively, and let $\lambda = (\cin-\cout)/(k\avgdeg)$.  We show that, when $k$ is large, and $\lambda = O(1/k)$, the critical value of $d$ at which community detection becomes possible---in physical terms, the condensation threshold---is 
\[
\dcond = \Theta\!\left( \frac{\log k}{k \lambda^2} \right) \, , 
\]
with tighter results in certain regimes.  Above this threshold, we show that the only partitions of the nodes into $k$ groups are correlated with the ground truth, giving an exponential-time algorithm that performs better than chance---in particular, detection is possible for $k \ge 5$ in the disassortative case $\lambda < 0$ and for $k \ge 11$ in the assortative case $\lambda > 0$.  (Similar upper bounds were obtained independently by Abbe and Sandon.)  Below this threshold, we use recent results of Neeman and Netrapalli (who generalized arguments of Mossel, Neeman, and Sly) to show that no algorithm can label the vertices better than chance, or even distinguish the block model from an \ER\ random graph with high probability.  We also rely on bounds on certain functions of doubly stochastic matrices due to Achlioptas and Naor; indeed, our lower bound on $\dcond$ is their second moment lower bound on the $k$-colorability threshold for random graphs with a certain effective degree.
\end{abstract}

\section{Introduction}
\label{sec:intro}

The Stochastic Block Model (SBM) is a random graph ensemble with planted community structure, where the probability of a connection between each pair of vertices is a function only of the groups or communities to which they belong.  It was originally invented in sociology~\cite{HLL83}; it was reinvented in physics and mathematics under the name ``inhomogeneous random graph''~\cite{Soderberg02,BJR07}, and in computer science as the planted partition problem (e.g.~\cite{mcsherry}).  

Given the current interest in network science, the block model and its variants have become popular parametric models for the detection of community structure.  An interesting set of questions arise when we ask to what extent the communities, i.e., the labels describing the vertices' group memberships, can be recovered from the graph it generates.  In the case where the average degree grows as $\log n$, if the structure is sufficiently strong then the underlying communities can be recovered~\cite{BC09}, and the threshold at which this becomes possible has recently been determined~\cite{abbe-bandeira-hall,abbe-sandon,agarwal-etal}.  Above this threshold, efficient algorithms exist that recover the communities exactly, labeling every vertex correctly with high probability; below this threshold, exact recovery is information-theoretically impossible. 

In the sparse case where the average degree is $O(1)$, finding the communities is more difficult, since we effectively have only a constant amount of information about each vertex.  In this regime, our goal is to label the vertices better than chance, i.e., to find a partition with nonzero correlation or mutual information with the ground truth.  This is sometimes called the \emph{detection} problem to distinguish it from exact recovery.  A set of phase transitions for this problem was conjectured in the statistical physics literature based on tools from spin glass theory.  Some of these conjectures have been made rigorous, while others remain as tantalizing open problems.

\subsection{The Kesten-Stigum bound, information-theoretic detection, and condensation}

Let $k$ be the number of groups, and let the probability of edges between vertices within and between groups be $\cin/n$ and $\cout/n$ respectively.  Assuming each group is of size $n/k$, the average degree is then 
\begin{equation}
\label{eq:avgdeg}
\avgdeg = \frac{\cin + (k-1)\cout}{k} \, . 
\end{equation}
It is convenient to parametrize the strength of the community structure as 
\begin{equation}
\label{eq:lambda}
\lambda = \frac{\cin-\cout}{k\avgdeg} \, .
\end{equation}
As we will see below, this is the second eigenvalue of a transition matrix describing how labels are ``transmitted'' between neighboring vertices.  It lies in the range
\[
-\frac{1}{k-1} \le \lambda \le 1 \, ,
\]
where $\lambda = -1/(k-1)$ corresponds to $\cin = 0$ (also known as the planted graph coloring problem, see below) and $\lambda = 1$ corresponds to $\cout = 0$ where vertices only connect to others in the same group.  We say that block models with $\lambda > 0$ and $\lambda < 0$ are \emph{assortative} and \emph{disassortative} respectively.

The conjecture of~\cite{DKMZ11a,DKMZ11b} is that efficient algorithms exist if and only if we are above the threshold
\begin{equation}
\label{eq:kesten-stigum}
d = \frac{1}{\lambda^2} \, .
\end{equation}
This is known in information theory as the Kesten-Stigum threshold~\cite{Kesten1966,Kesten1966a}, and in physics as the Almeida-Thouless line~\cite{AlmeidaThouless78}.  

Above the Kesten-Stigum threshold, \cite{DKMZ11a,DKMZ11b} claimed that community detection is computationally easy, and moreover that belief propagation---also known in statistical physics as the cavity method---is asymptotically optimal in that it maximizes the fraction of vertices labeled correctly (up to a permutation of the groups).  For $k=2$, this was proved in~\cite{MNS-colt}, and very recently, a type of belief propagation was shown to perform better than chance for all $k$~\cite{abbe-sandon-more-groups}.  In addition, a spectral clustering algorithm based on the non-backtracking operator was conjectured to succeed all the way down to the Kesten-Stigum threshold~\cite{Krzakala13}, and this was proved in~\cite{bordenave-lelarge-massoulie}.

What happens below the Kesten-Stigum threshold is more complicated.  The authors of \cite{DKMZ11a,DKMZ11b} conjectured that for sufficiently small $k$, community detection is information-theoretically impossible when $d < 1/\lambda^2$.  This was proved in the case $k=2$ by~\cite{MNS12}, who established two separate results.  First, they showed that the ensemble of graphs produced by the stochastic block model becomes \emph{contiguous} with that produced by \ER\ graphs of the same average degree, making it impossible even to tell whether or not communities exist with high probability.  Secondly, by relating community detection to the robust reconstruction problem on trees~\cite{janson2004robust}, they showed that for most pairs of vertices the probability, given the graph, that they are in the same group asymptotically approaches $1/2$.  Thus it is impossible, even if we could magically compute the true posterior probability distribution, to label the vertices better than chance. 

On the other hand, \cite{DKMZ11a,DKMZ11b} conjectured that for sufficiently large $k$, namely $k \ge 5$ in the assortative case $\cin > \cout$ and $k \ge 4$ in the disassortative case $\cin < \cout$, there is a ``hard but detectable'' regime where community detection is information-theoretically possible, but computationally hard.  One indication of this is the extreme case where $\cin = 0$: this is equivalent to the planted graph coloring problem where we choose a random coloring of the vertices, and then choose $dn/2$ edges uniformly from all pairs of vertices with different colors.  In this case, we have $\lambda = -1/(k-1)$ and~\eqref{eq:kesten-stigum} becomes $d > (k-1)^2$.  However, while graphs generated by this case of the block model are $k$-colorable by definition, the $k$-colorability threshold for \ER\ graphs grows as $2k \ln k$~\cite{achlioptas-naor}, and falls below the Kesten-Stigum threshold for $k \ge 5$.  In between these two thresholds, we can at least distinguish the two graph ensembles by asking whether a $k$-coloring exists; however, finding one might take exponential time. 

More generally, planted ensembles where some combinatorial structure is built into the graph, and un-planted ensembles such as \ER\ graphs where these structures occur by chance, become distinguishable at a phase transition called \emph{condensation}~\cite{Krzakala2007a}.  Below this point, the two ensembles are contiguous; above it, the Gibbs distribution in the planted model is dominated by a cluster of states surrounding the planted state.  For instance, in random constraint satisfaction problems, the uniform distribution on solutions becomes dominated by those near the planted one; in our setting, the posterior distribution of partitions becomes dominated by those close to the ground truth (although, in the sparse case, with a Hamming distance that is still linear in $n$).  Thus the condensation threshold is also the threshold for information-theoretic community detection.  Below it, even optimal Bayesian inference will do no better than chance, while above it, typical partitions chosen from the posterior will be fairly accurate (though finding these typical partitions might take exponential time).  

We note that some previous results show that community detection is possible below the Kesten-Stigum threshold when the sizes of the groups are unequal~\cite{neeman-netrapalli,zhang-moore-newman}.  In addition, even a vanishing amount of initial information can make community detection possible if the number of groups grows with the size of the network~\cite{kanade-mossel-schramm}.

\subsection{Our contribution}

We give rigorous upper and lower bounds on the information-theoretic threshold for community detection, or equivalently the condensation threshold, bounding the critical average degree as a function of $k$ and $\lambda$.  First, we use a first-moment argument to show that if 
\begin{equation}
\label{eq:d-upper}
d > \dcondupper 
= \frac{2 k \log k}
{(1+(k-1) \lambda) \log (1+(k-1) \lambda) 
+ (k-1)(1-\lambda) \log (1-\lambda)} \, ,
\end{equation}
then, with high probability, the only partitions that are as good as the planted one---that is, which have the expected number of edges within and between groups---have a nonzero correlation with the planted one.  As a result, there is a simple exponential time algorithm for labeling the vertices better than chance: simply test all partitions, and output the first good one.  

We note that $\dcondupper < 1/\lambda^2$ for $k \ge 5$ when $\lambda$ is sufficiently negative, including the case $\lambda = -1/(k-1)$ corresponding to graph coloring discussed above.  Moreover, for $k \ge 11$, there also exist positive values of $\lambda$ for $\dcondupper < 1/\lambda^2$.  Thus for sufficiently large $k$, detectability is information-theoretically possible below the Kesten-Stigum threshold, in both the assortative and disassortative case.  Similar (and somewhat tighter) results were obtained independently by~\cite{abbe-sandon-more-groups}.

We then show that community detection is information-theoretically impossible if
\begin{equation}
\label{eq:d-lower}
d < \dcondlower 
= \frac{2\log(k-1)}{k-1} \frac{1}{\lambda^2}
\, . 
\end{equation}
Here we rely heavily on a recent preprint~\cite{neeman-netrapalli}, who gave a beautiful generalization of the argument of~\cite{MNS12}.  Using the small subgraph conditioning method, they showed that the block model and the \ER\ graph are contiguous whenever the second moment of the ratio between their probabilities---roughly speaking, the number of good partitions in an \ER\ graph---is appropriately bounded.  They also show that this second moment bound implies non-detectability, in that the posterior distribution on any finite collection of vertices is asymptotically uniform.  This reduces the proof of contiguity and non-detectability to a second moment argument, which in turn consists of maximizing a certain function of doubly stochastic matrices.  

Happily, this latter problem was largely solved in~\cite{achlioptas-naor}, who used the second moment method to give nearly-tight lower bounds on the $k$-colorability threshold.  Indeed, our bound~\eqref{eq:d-lower} corresponds to their lower bound on $k$-colorability for $G(n,\avgdeg'/n)$ where $\avgdeg' = \avgdeg \lambda^2 (k-1)^2$.  Intuitively, $\avgdeg'$ is the degree of a random graph in which the correlations between vertices in the $k$-colorability problem are as strong as those in the block model with average degree $\avgdeg$ and eigenvalue $\lambda$.

Our bounds are tight in some regimes, and rather loose in others.  Let $\mu$ denote $(\cin-\cout)/\avgdeg$.  If $\mu$ is constant and $k$ is large, we have
\[
\lim_{k \to \infty} \frac{\dcondupper}{\dcondlower}
= \frac{\mu^2}{(1+\mu) \log (1+\mu) - \mu} \, .
\]
In the limit $\mu = -1$, corresponding to graph coloring, this ratio is $1$, inheriting the tightness of previous upper and lower bounds on $k$-colorability.  For other values of $\mu$, our bounds match up to a multiplicative constant.  In particular, when $k$ is constant and $|\lambda|$ is small, they are about a factor of $2$ apart:
\[
\frac{2\log(k-1)}{k-1} \le \dcond \lambda^2 \le \frac{4 \log k}{k-1} (1+O(k \lambda)) \, . 
\]
When $\lambda \ge 0$ is constant and $k$ is large, we have 
\[
\dcondupper = \frac{2}{\lambda} (1 + O(1/\log k)) \, ,
\]
so that in the limit of large $k$ detectability is possible below the Kesten-Stigum threshold whenever $\lambda < 1/2$.

\section{Our model, notation, and results}

We have $n$ vertices $V=[n]$.  In the usual version of the block model, we start by choosing a partition $\sigma : [n] \to [k]$ uniformly from the $k^n$ possibilities.  We independently include each pair of vertices $(u, v)$ in the edge set $E$ with probability $c_{\sigma(u),\sigma(v)}/n$, where $c$ is a $k \times k$ matrix.  As in much other work on community detection, we focus on the special case 
\begin{equation}
\label{eq:c-cin-cout}
c_{r,s} = \begin{cases} \cin & \mbox{if $r=s$} \\ \cout & \mbox{if $r \ne s$} \, . 
\end{cases}
\end{equation}
We will often find it convenient to assume that the partition $\sigma$ is \emph{balanced}, i.e., that $n$ is divisible by $k$ and that there are $|\sigma^{-1}(r)| = n/k$ vertices in each group $r$.  Of course, this is true within an $o(n)$ error term with high probability. 

%

Recalling that the expected average degree is 
\[
\avgdeg = \frac{\cin + (k-1)\cout}{k} \, ,
\]
we will find it useful to define a doubly stochastic matrix, 
\begin{align}
	\connectivity = \frac{c}{k\avgdeg} 
	= \frac{1}{k\avgdeg} \begin{pmatrix}
			\cin & {} & \cout \\
			{} & \ddots & {} \\
			\cout & {} & \cin
	\end{pmatrix} \, .
	\label{eq:connectivity}
\end{align}
We can think of $\connectivity$ as a transition matrix in a Markov random field.  All else equal, if $(u,v) \in E$ and $\sigma(u) = r$, then $\connectivity_{rs}$ is the probability that $\sigma(v) = s$.  Notice that 
\begin{equation}
\label{eq:id}
\connectivity = \lambda \1 + (1-\lambda) \frac{\J}{k} \, , 
\end{equation}
where $\J$ is the matrix of all $1$s, and where 
\[
\lambda = \frac{\cin-\cout}{k\avgdeg} 
\]
is $\connectivity$'s second eigenvalue.  We can think of $\lambda$ as the probability that information is transmitted from $u$ to $v$: with probability $\lambda$ we copy $u$'s group label to $v$, and with probability $1-\lambda$ we choose $v$'s group uniformly.  The parameter $\lambda$ interpolates between the case $\lambda=1$ where all edges are within-group, to an \ER\ graph where $\lambda = 0$ and edges are placed uniformly at random, to $\lambda < 0$ where edges are more likely between groups than within them.  This gives a useful reparametrization of the model in terms of $c$ and $\lambda$, where
\begin{align}
	\cin &= \avgdeg (1 + (k-1) \lambda) \nonumber \\
	\cout &= \avgdeg (1 - \lambda) \, .
	\label{eq:reparam}
\end{align}


The community detection problem is to recover the planted partition $\sigma$ from the graph $G=(V,E)$.  We define the degree to which an algorithm succeeds as follows.  Given another partition $\tau$, we define the overlap matrix 
\[
\overlap_{rs}
= \frac{k}{n} | \sigma^{-1}(r) \cap \tau^{-1}(s) | \, . 
\]
Since $\sigma$ is balanced, this is the fraction of group $r$ (according to $\sigma$) that is in group $s$ (according to $\tau)$.  If $\tau$ is balanced as well, then $\overlap$ is doubly stochastic.  The overlap $\beta$ is then the fraction of vertices labeled correctly, maximized over all permutations $\pi$ of the groups,
\[
\beta = \frac{1}{k} \max_\pi \sum_r \overlap_{r,\pi(r)} = \frac{1}{k} \max_\pi \Tr \pi \overlap \, , 
\]
where in the second expression we interpret $\pi$ as a permutation matrix.  We define the information-theoretic threshold $\dcond$ as the value of $\avgdeg$ above which a (possibly exponential-time) algorithm exists that performs better than chance, i.e., which achieves an overlap bounded above $1/k$.  

As in~\cite{MNS12,neeman-netrapalli} we will compare the stochastic block model to the \ER\ random graph, and ask to what extent these two probability distributions differ.  We use $\P$ for the probability of a graph in the block model, and $\Q$ for the \ER\ model $G(n,\avgdeg/n)$.  Thus
\begin{align*}
\P(G \mid \sigma) 
&= \prod_{(u,v) \in E} \frac{c_{\sigma(u),\sigma(v)}}{n} \prod_{(u,v) \notin E} \left( 1-\frac{c_{\sigma(u),\sigma(v)}}{n} \right) \\
\P(G) &= \frac{1}{k^n} \sum_{\sigma \in [k]^n} \P(G \mid \sigma) \\
\Q(G) &= \prod_{(u,v) \in E} \frac{\avgdeg}{n} \prod_{(u,v) \notin E} \left( 1-\frac{\avgdeg}{n} \right) \, . 
\end{align*}

We now state our main result.

\begin{theorem}  
The information-theoretic transition for community detection in the block model with parameters $\cin, \cout$ is bounded above and below by 
\[
\dcondlower \le \dcond \le \dcondupper
\]
where
\begin{align}
\dcondupper 
&= \frac{2 k \log k}
{(1+(k-1) \lambda) \log (1+(k-1) \lambda) 
+ (k-1)(1-\lambda) \log (1-\lambda)} \\
\dcondlower 
&= \frac{2\log(k-1)}{k-1}\frac{1}{\lambda^2} \, ,
\end{align}
where $\avgdeg$ and $\lambda$ are defined as in~\eqref{eq:avgdeg} and~\eqref{eq:lambda}.  
That is, if $\avgdeg > \dcondupper$ there is an exponential-time algorithm that achieves overlap bounded above $1/k$; while if $\avgdeg < \dcondlower$ the block model is contiguous to the \ER\ random graph $G(n,\avgdeg/n)$, and no algorithm can achieve overlap bounded above $1/k$.
\end{theorem}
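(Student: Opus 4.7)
The proof naturally splits into two independent halves: a first-moment argument for the upper bound $\dcond \le \dcondupper$, and an application of the Neeman-Netrapalli small-subgraph conditioning framework (combined with the Achlioptas-Naor optimization) for the lower bound $\dcond \ge \dcondlower$.

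For the upper bound, I would call a balanced partition $\tau$ \emph{good} if the numbers of within- and between-group edges of $G$ counted according to $\tau$ match the planted-model means $n\cin/(2k)$ and $n(k-1)\cout/(2k)$. A Chernoff estimate shows $\sigma$ itself is good with high probability, so the proposed exponential-time algorithm---enumerate all balanced partitions and output any good one---always returns something. To control its overlap, let $Z_\overlap$ be the number of good balanced partitions with overlap matrix $\overlap$ relative to $\sigma$. I would compute $\expected Z_\overlap$ as the product of a multinomial count of partitions with overlap $\overlap$ and the probability that the disjoint within- and between-$\tau$ edge sums simultaneously hit their SBM targets; since within- and between-$\tau$ pairs are disjoint, the latter factor splits into two independent Bernoulli large-deviation rates. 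Evaluated at the uniform overlap $\overlap = \J/k$, and combining terms via $\cin + (k-1)\cout = k\avgdeg$, this yields
\[
\frac{1}{n}\log \expected Z_{\J/k} \;\longrightarrow\; \log k - \frac{\avgdeg}{2k}\bigl[(1+(k-1)\lambda)\log(1+(k-1)\lambda) + (k-1)(1-\lambda)\log(1-\lambda)\bigr] \, ,
\]
which is strictly negative precisely when $\avgdeg > \dcondupper$. A continuity and union-bound argument over the polynomially many overlap matrices in a neighborhood of $\J/k$ then forces any good partition to have overlap bounded above $1/k$.

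For the lower bound, I would invoke the main theorem of Neeman and Netrapalli: if the second moment $\expected_\Q[(\P/\Q)^2]$ grows only as the exponential of a convergent sum over short cycles, then $\P$ and $\Q$ are contiguous and the posterior marginals on any finite set of vertices tend to uniform, ruling out detection. Writing $\expected_\Q[(\P/\Q)^2]$ as an average over pairs of planted labelings $(\sigma,\sigma')$ and expanding the per-edge factor to leading order yields a product whose logarithm is a quadratic form in $\connectivity - \J/k = \lambda(\1 - \J/k)$, where the identity from~\eqref{eq:id} makes the role of $\lambda$ transparent. Combined with the entropy of the overlap matrix $\overlap$ between $\sigma$ and $\sigma'$, the second moment reduces to $\sum_\overlap \exp(n\Phi(\overlap))$ for an explicit functional $\Phi$ on the doubly stochastic polytope. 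Under the substitution of an effective degree $\avgdeg' = \avgdeg\,\lambda^2 (k-1)^2$, this variational problem is identical to the one Achlioptas and Naor solved for the $k$-colorability threshold, and their bound gives $\Phi(\overlap) \le 0$ globally whenever $\avgdeg < \dcondlower$, with equality only at $\overlap = \J/k$.

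The main obstacle is the global analysis of $\Phi$ in the lower bound. A Hessian calculation at $\overlap = \J/k$ shows only local negativity under the weaker Kesten-Stigum condition $\avgdeg\lambda^2 < 1$, so the sharper bound $\dcondlower$ requires ruling out strictly correlated maximizers on the interior of the doubly stochastic polytope via the convexity and Lagrangian arguments of Achlioptas and Naor, adapted to the general signal-to-noise ratio parametrized by $\lambda$ rather than the pure coloring case $\lambda = -1/(k-1)$. Verifying finiteness of the small-cycle correction term in the Neeman-Netrapalli framework is a secondary hurdle, but follows from a standard Poisson moment computation for short cycles in the $\SBM$ once the quadratic second moment is controlled.
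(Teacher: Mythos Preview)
Your proposal is correct and follows essentially the same two-part strategy as the paper: a first-moment count of ``good'' partitions for the upper bound, and the Neeman--Netrapalli second-moment criterion reduced via Achlioptas--Naor for the lower bound. The lower-bound half is virtually identical to the paper's, including the key reduction to the $k$-colorability second moment through the effective degree $\avgdeg' = \avgdeg\lambda^2(k-1)^2$.

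The only notable difference is in the upper bound's execution. You evaluate $\expected Z_\overlap$ only at $\overlap=\J/k$ and then invoke continuity to handle a neighborhood; this works (one can check that $\J/k$ is the unique doubly stochastic matrix with overlap exactly $1/k$, so bounded distance from $\J/k$ forces overlap bounded above $1/k$), but yields only an unspecified positive gap. The paper instead parametrizes the large-deviation rate through the Frobenius norm $|\overlap|^2$, proves the lemma $|\overlap|^2 \le k\beta$ via Birkhoff's theorem, and combines it with the entropy bound $H(\overlap)\le h(\beta)+(1-\beta)\log(k-1)$ to obtain an explicit lower bound on the overlap $\beta$ of any good partition as a function of $\avgdeg$. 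Your soft argument suffices for the theorem as stated; the paper's route buys the quantitative refinement.
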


\begin{corollary}
For $k \ge 5$, community detection is information-theoretically possible below the Kesten-Stigum threshold $\avgdeg = 1/\lambda^2$ for $\lambda$ sufficiently negative.  For $k \ge 11$, there exist positive values of $\lambda$ for which this holds as well.
\end{corollary}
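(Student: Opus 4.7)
The plan is to reduce the corollary to a purely analytic inequality and check it at two extreme regimes of $\lambda$. Since the theorem gives $\dcond \le \dcondupper$, it suffices to exhibit, for the claimed values of $k$ and $\lambda$, the strict inequality $\dcondupper < 1/\lambda^2$. Clearing denominators, this is equivalent to
\[
2 k \lambda^2 \log k \;<\; (1+(k-1)\lambda)\log(1+(k-1)\lambda) + (k-1)(1-\lambda)\log(1-\lambda) \, .
\]
I will verify this strict inequality at an endpoint in each case and then use continuity to perturb away from the endpoint.

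For the disassortative case I would evaluate at the extreme value $\lambda = -1/(k-1)$, i.e.\ $\cin = 0$. The first term on the right vanishes because $x \log x \to 0$ as $x \to 0^+$, the second reduces to $k \log(k/(k-1))$, and $1/\lambda^2 = (k-1)^2$. The inequality to check becomes $2 \log k < (k-1)^2 \log\!\bigl(k/(k-1)\bigr)$. Using $\log(1+y) \ge y - y^2/2$ with $y = 1/(k-1)$, the right-hand side is at least $k - 3/2$, so it is enough to show $2 \log k < k - 3/2$; this holds at $k=5$ by direct computation ($2\log 5 \approx 3.22 < 3.5$) and persists for $k \ge 5$ since $k - 3/2 - 2 \log k$ has derivative $1 - 2/k > 0$ for $k \ge 3$. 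Both sides of the original inequality are continuous in $\lambda$, so strict inequality at $\lambda = -1/(k-1)$ extends to a whole neighborhood $\bigl(-1/(k-1),\, -1/(k-1) + \varepsilon_k\bigr)$, giving values of $\lambda$ "sufficiently negative" as required.

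For the assortative case I would expand $\dcondupper$ near $\lambda = 0^+$. Writing $(1+u)\log(1+u) = u + u^2/2 + O(u^3)$ with $u = (k-1)\lambda$ and $u = -\lambda$ respectively, the linear terms $(k-1)\lambda$ and $-(k-1)\lambda$ cancel, and the quadratic contributions sum to $\tfrac{1}{2} \lambda^2\bigl[(k-1)^2 + (k-1)\bigr] = \tfrac{k(k-1)}{2} \lambda^2$. Hence
\[
\dcondupper \cdot \lambda^2 \;=\; \frac{4 \log k}{k-1}\bigl(1 + O(\lambda)\bigr) \qquad \text{as } \lambda \to 0^+ .
\]
A short calculus check shows $f(k) := 4\log k /(k-1)$ has derivative $\frac{4}{(k-1)^2}\bigl(1 - 1/k - \log k\bigr) < 0$ for $k \ge 3$, so $f$ is strictly decreasing; and $f(11) = 4 \log 11 / 10 \approx 0.959 < 1$. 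Thus for every $k \ge 11$ the limiting value $f(k)$ is strictly below $1$, and by continuity of $\dcondupper \lambda^2$ at $\lambda = 0^+$ there exists some $\lambda_k > 0$ with $\dcondupper < 1/\lambda_k^2$.

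The main obstacle is cosmetic rather than structural: the algebra is routine, but the expansion must be done carefully because $\dcondupper$ itself diverges at $\lambda = 0$, so one must work with the continuous quantity $\dcondupper \cdot \lambda^2$ and keep track of the cancellation between the two $\log$-terms. All the conceptual work has already been done in establishing the upper bound $\dcondupper$ in the theorem; the corollary is a purely numerical comparison to the Kesten-Stigum value $1/\lambda^2$ at the two ends of the allowed range of $\lambda$.
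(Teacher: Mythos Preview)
Your proposal is correct and follows essentially the same route as the paper: compare $\dcondupper$ to $1/\lambda^2$ at the disassortative endpoint $\lambda=-1/(k-1)$ (where $\dcondupper$ reduces to the first-moment $k$-colorability bound) and via the small-$\lambda$ expansion $\dcondupper\lambda^2 \to 4\log k/(k-1)$, then invoke continuity. The paper itself is less explicit---it records numerical values of the crossover $\lambda^*$ in Table~\ref{tab:1} and states the asymptotic in the introduction---so your analytic verification of $2\log k < (k-1)^2\log\bigl(k/(k-1)\bigr)$ for $k\ge 5$ and $4\log k/(k-1)<1$ for $k\ge 11$ makes the argument more self-contained without changing its substance.
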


The bounds $\dcondupper$ and $\dcondlower$ are obtained in~\S\ref{sec:first} and~\S\ref{sec:second} respectively.

\section{The Upper Bound}
\label{sec:first}

Our upper bound on the detectability threshold hinges on the following observation.  With high probability, a graph generated by the SBM has at least one balanced partition, close to the the planted one, where the number of within-group and between-group edges $\ein$ and $\eout$ are close to their expectations.  That is, 
\begin{equation}
\label{eq:in-close}
|\ein - \mein| < n^{2/3} 
\quad \text{and} \quad 
|\eout - \meout| < n^{2/3}
\end{equation}
where
\begin{align} 
\mein &= \frac{\cin}{2k} \,n
= \frac{\avgdeg (1+(k-1)\lambda)}{2k} \,n \nonumber \\
\meout &= \frac{(k-1) \cout}{2k} \,n
= \frac{\avgdeg (k-1)(1-\lambda)}{2k} \,n \, .
\label{eq:mein-def}
\end{align}
This follows from standard concentration inequalities on the binomial distribution: the number of vertices in each group in $\sigma$ is w.h.p.\ $n/k + o(n^{2/3} / \log n)$, in which case~\eqref{eq:in-close} holds w.h.p.  Since the maximum degree is w.h.p.\ less than $\log n$, we can modify $\sigma$ to make it balanced while changing $\ein$ and $\eout$  by $o(n^{2/3})$.

Call such a partition \emph{good}.  We will show that if $\avgdeg > \dcondupper$ all good partitions are correlated with the planted one.  As a result, there is an exponential algorithm that performs better than chance: simply use exhaustive search to find a good partition, and output it.

\subsection{Distinguishability from $G(n,\avgdeg/n)$}

As a warm-up, we show that if $\avgdeg > \dcondupper$ the probability that an \ER\ graph has a good partition is exponentially small, so the two distributions $\P$ and $\Q$ are asymptotically orthogonal.

Let $G$ be a graph generated by $G(n,\avgdeg/n)$.  We condition on the high-probability event that it has $m$ edges with $|m - \mm| < n^{2/3}$ with
\[
\mm = \mein + \meout = \avgdeg n/2 \, ,
\]
in which case $G$ is chosen from $G(n,m)$.  Since $G$ is sparse, we can think of its $m$ edges as chosen uniformly with replacement from the $n^2$ possible ordered pairs.  With probability $\Theta(1)$ the resulting graph is simple, with no self-loops or multiple edges, and hence uniform in $G(n,m)$.  Thus any event that holds with high probability in the resulting model holds with high probability in $G(n,m)$ as well.  Call this model $G'(n,m)$.  

For a given balanced partition $\sigma$, the probability in $G'(n,m)$ that a given edge has its endpoints in the same group is $1/k$.  Thus, up to subexponential terms resulting from summing over the $n^{2/3}$ possible values of the error terms, the probability that a given $\sigma$ is good is 
\[
\Pr[\Bin(\mm,1/k) = \mein] = {\mm \choose \mein} (1/k)^\mein (1-1/k)^{\meout} \, . 
\]
The rate of this large-deviation event is given by the Kullback-Leibler divergence between binomial distributions with success probability $1/k$ and $\mein/\mm$, 
\begin{align*}
\lim_{\mm \to \infty} \frac{1}{\mm} \log \Pr[\Bin(\mm,1/k) = \mein]
&= -\frac{\mein}{\mm} \log \frac{\mein/\mm}{1/k} - \frac{\meout}{\mm} \log \frac{\meout/\mm}{1-1/k} \\
&= -\frac{\cin}{k \avgdeg} \log \frac{\cin}{\avgdeg} - \left( 1 - \frac{\cin}{k \avgdeg} \right) \log \frac{k \avgdeg-\cin}{\avgdeg (k-1)} \, , 
\end{align*}
where we used $\mein/\mm = \cin/(k \avgdeg)$ and $\meout/\mm = 1-\cin/(k \avgdeg)$.  Writing this in terms of $\avgdeg$ and $\lambda$ as in~\eqref{eq:reparam} and simplifying gives
\begin{equation}
\label{eq:good-rate}
\lim_{n \to \infty} \frac{1}{n} \Pr[\mbox{$\sigma$ is good}] 
= - \frac{\avgdeg}{2k} \big[
(1+(k-1) \lambda) \log (1+(k-1) \lambda) 
+ (k-1)(1-\lambda) \log (1-\lambda)
\big] 
\, . 
\end{equation}
Now, by the union bound, since there are at most $k^n$ balanced partitions, the probability that any good partitions exist is exponentially small whenever the function in~\eqref{eq:good-rate} is less than $-\log k$.  This tells us that the block model is distinguishable from an \ER\ graph whenever 
\[
\avgdeg > \dcondupper 
= \frac{2 k \log k}
{(1+(k-1) \lambda) \log (1+(k-1) \lambda) 
+ (k-1)(1-\lambda) \log (1-\lambda)} \, ,
\]
As noted above, the limit $\lambda = -1/(k-1)$ corresponds to the planted graph coloring problem.  In this case $\dcondupper$ is simply the first-moment upper bound on the $k$-colorability threshold, 
\[
\dcondupper = \frac{2 \log k}{-\log (1-1/k)} < 2 k \log k \, .
\]

\subsection{All good partitions are accurate} 

Next we show that, if $d > \dcondupper$, with high probability any good partition is correlated with the planted one.  Essentially, the previous calculation for $G(n,m)$ corresponds to counting good partitions which are uncorrelated with $\sigma$, i.e., which have a flat overlap matrix $\overlap = \J/k$.  We will show that in order for a good partition to exist, its overlap matrix must have some large entries, in which case its overlap with $\sigma$ is bounded above $1/k$.  

Given a balanced partition $\tau$, let $\ein$ and $\eout$ denote the number of edges $(u,v)$ with $\tau(u) = \tau(v)$ and $\tau(u) \ne \tau(v)$ respectively.  As in the previous section, we say that $\tau$ is \emph{good} if~\eqref{eq:in-close} holds, i.e., $|\ein-\mein|, |\eout-\meout| < n^{2/3}$ where $\mein$ and $\meout$ are given by~\eqref{eq:mein-def}.

\begin{theorem}
\label{thm:good-accurate}
Let $G$ be generated by the stochastic block model with parameters $\cin$ and $\cout$, and let $\avgdeg$ and $\lambda$ be defined as in~\eqref{eq:avgdeg} and~\eqref{eq:lambda}.  If $\avgdeg > \dcondupper$ then, with high probability, any good partition has overlap at least $\beta > 1/k$ with the planted partition $\sigma$, where $\beta$ is the smallest root of 
\begin{equation}
\label{eq:good-accurate}
\avgdeg = \frac{2k \bigl( h(\beta) + (1-\beta) \log (k-1) \bigr)}
{
(1+(k-1)\lambda) \log \frac{1+(k-1) \lambda}{1 + (k\beta-1) \lambda}
+ (k-1)(1-\lambda) \log \frac{(k-1)(1-\lambda)}{k - 1 - (k\beta-1) \lambda}
}
\end{equation}
where $h = -\beta \log \beta - (1-\beta) \log (1-\beta)$ is the entropy function.  Therefore, an exponential-time algorithm exists that w.h.p.\ achieves overlap at least $\beta$.
\end{theorem}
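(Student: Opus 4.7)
The plan is to extend the first-moment warm-up by tracking the full $k\times k$ overlap matrix $\alpha$ between a candidate balanced partition $\tau$ and the planted $\sigma$. Fixing $\sigma$, Stirling's formula on the multinomial shows the number of balanced $\tau$ with a given overlap matrix is $\exp[(n/k)\sum_r H(\alpha_{r,\cdot}) + O(\log n)]$. Conditional on such a $\tau$, a short calculation using $c = \cout J + (\cin-\cout) I$ and the double stochasticity of $\alpha$ gives that a random SBM edge lies within a $\tau$-group with probability
\[
q_{\mathrm{in}}(\alpha) = \frac{1-\lambda}{k} + \frac{\lambda\,\|\alpha\|_F^2}{k}.
\]
Since the SBM edges are well-approximated as i.i.d.\ in type (exactly as in \S3.1), $m_{\mathrm{in}}(\tau)$ is approximately $\mathrm{Bin}(\mm,q_{\mathrm{in}}(\alpha))$, and its large-deviation rate for hitting $\mein$ is $(d/2)\,D(p_{\mathrm{in}}\,\|\,q_{\mathrm{in}}(\alpha))$ with $p_{\mathrm{in}} = (1+(k-1)\lambda)/k$. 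Combining, the expected number of good $\tau$ with overlap matrix $\alpha$ equals $\exp[n\,F(\alpha) + o(n)]$ for
\[
F(\alpha) = \frac{1}{k}\sum_r H(\alpha_{r,\cdot}) - \frac{d}{2}\,D(p_{\mathrm{in}}\,\|\,q_{\mathrm{in}}(\alpha)).
\]

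The key step is then to bound $F(\alpha) \leq F^*(\beta)$ by a function of the scalar overlap $\beta$ alone. After permuting columns so that the identity realizes the optimal assignment (hence $\sum_r \alpha_{rr} = k\beta$), two inequalities control the two terms of $F$. For entropy, the chain-rule bound $H(\alpha_{r,\cdot}) \leq h(\alpha_{rr}) + (1-\alpha_{rr})\log(k-1)$ followed by Jensen on the concave $h$ yields
\[
\frac{1}{k}\sum_r H(\alpha_{r,\cdot}) \leq h(\beta) + (1-\beta)\log(k-1),
\]
precisely the numerator of~\eqref{eq:good-accurate} divided by $2k$. For the Frobenius norm I would use Birkhoff--von Neumann to write $\alpha = \sum_i p_i P_i$ as a convex combination of permutation matrices and observe
\[
\|\alpha\|_F^2 = \sum_{i,j} p_i p_j \langle P_i, P_j \rangle_F \leq \max_j \sum_i p_i \langle P_i, P_j \rangle_F \leq k\beta,
\]
where the inner maximum is bounded by the optimal assignment value of $\alpha$. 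Regardless of the sign of $\lambda$, this forces $q_{\mathrm{in}}(\alpha)$ to lie at least as far from $p_{\mathrm{in}}$ as $q^*_{\mathrm{in}}(\beta) := (1+(k\beta-1)\lambda)/k$ does, and the monotonicity of $q \mapsto D(p_{\mathrm{in}}\,\|\,q)$ on either side of $q = p_{\mathrm{in}}$ then gives $D(p_{\mathrm{in}}\,\|\,q_{\mathrm{in}}(\alpha)) \geq D(p_{\mathrm{in}}\,\|\,q^*_{\mathrm{in}}(\beta))$. Expanding this KL divergence in terms of $\cin, \cout, \lambda$ recovers the denominator of~\eqref{eq:good-accurate}, and $F^*(\beta) = 0$ becomes exactly~\eqref{eq:good-accurate}.

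To close, a union bound over a polynomial discretization of the doubly stochastic polytope forces $\expected[\#\,\text{good } \tau \text{ with overlap}\leq \beta]$ to be $\exp[-\Omega(n)]$ whenever $F^*(\beta') < 0$ for every $\beta' \leq \beta$. The warm-up of~\S3.1 is exactly the assertion $F^*(1/k) = \log k - (d/2)\,D(p_{\mathrm{in}}\,\|\,1/k) < 0$ under $d > \dcondupper$; by continuity, $F^*$ remains negative on $[1/k, \beta^*)$ for the smallest root $\beta^*$ of~\eqref{eq:good-accurate} above $1/k$, and Markov forces every good $\tau$ to have overlap at least $\beta^* > 1/k$ with high probability. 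The main obstacle I anticipate is the joint control of the two terms of $F$: the entropy $\sum_r H(\alpha_{r,\cdot})$ is maximized by the symmetric $\alpha_{rr}=\beta$, $\alpha_{rs}=(1-\beta)/(k-1)$, whereas the Frobenius norm is maximized at extreme points of the Birkhoff polytope, and the fortunate fact that both inequalities can be applied in parallel---with $\|\alpha\|_F^2$ pushed all the way up to its extreme value $k\beta$ rather than its much smaller symmetric value---is precisely what lets the envelope $F^*(\beta)$ collapse to a single clean formula in $\beta$.
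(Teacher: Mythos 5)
Your proposal mirrors the paper's proof essentially step for step: same switch to an edge-sampling (configuration-style) SBM model, same formula $q = \bigl(1 + (|\overlap|_F^2-1)\lambda\bigr)/k$, same KL large-deviation rate, same Birkhoff bound $|\overlap|_F^2 \le k\beta$, same entropy maximization at the symmetric overlap matrix, and the same union bound over overlap matrices. The only cosmetic differences are in packaging (you spell out the chain-rule-plus-Jensen step for the entropy and phrase the monotonicity argument in terms of $q \mapsto D(p_{\mathrm{in}}\,\|\,q)$ rather than the paper's somewhat misleading claim that the rate is ``increasing in $\lambda$''), so this is the same approach.
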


We note that the right-hand side of~\eqref{eq:good-accurate} is an increasing function of $\beta$, and that it coincides with $\dcondupper$ when $\beta=1/k$.

\begin{proof}
We start by conditioning on the high-probability event that $G$ has $m$ edges, where $|m-\mm| < n^{2/3}$ and $\mm = \avgdeg n/2$.  Call the resulting model $G_\SBM(n,m)$ (with the matrix of parameters $c$ implicit).  It consists of the distribution over all simple graphs with $m$ edges, with probability proportional to $\P(G \mid \sigma)$.

In analogy with the model $G'(n,m)$ defined above, we consider another version of the block model where the $m$ edges are chosen independently as follows.  
For each edge, we first choose an ordered pair of groups $r, s$ with probability proportional to $c_{rs}$, i.e., with probability $\connectivity_{rs}/k$ where $\connectivity = c/(k \avgdeg)$ is the doubly stochastic matrix defined in~\eqref{eq:connectivity}.  We then choose the endpoints $u$ and $v$ uniformly from $\sigma^{-1}(r)$ and $\sigma^{-1}(s)$ (with replacement if $r=s$).  
Call this model $G'_\SBM(n,m)$.  In the sparse case $\avgdeg = O(1/n)$, the resulting graph is simple with probability $\Theta(1)$, in which event it is generated by $G_\SBM(n,m)$.  Thus any event that holds with high probability in $G'_\SBM(n,m)$ holds with high probability in $G_\SBM(n,m)$ as well.

Now fix a balanced partition $\tau$, and let $q$ denote the probability that an edge $(u,v)$ chosen in this way is within-group with respect to $\tau$.  Recall that the overlap matrix $\overlap_{st}$ is the probability that $\tau(u)=t$ if $u$ is chosen uniformly from those with $\sigma(u)=s$.  Up to $O(1/n)$ terms, the events that $\tau(u)=t$ and $\tau(v)=t$ are independent.  Thus in the limit $n \to \infty$,
\begin{align*}
q = \Pr[\tau(u)=\tau(v)] 
&= \sum_{r,s,t} \Pr[\sigma(u)=r \wedge \sigma(v)=s \wedge \tau(u)=\tau(v)=t] \\
&= \frac{1}{k} \sum_{r,s,t} \connectivity_{rs} \overlap_{rt} \overlap_{st} \\
&= \frac{1}{k} \Tr \overlap^\dagger \connectivity \overlap \, ,
\end{align*}
where $\dagger$ denotes the matrix transpose.  Using~\eqref{eq:id} and $\J \overlap = \overlap \J = \J$, this gives
\[
q = \frac{1 + (|\overlap|^2-1) \lambda}{k} \, .
\]
where $|\overlap|$ denotes the Frobenius norm, 
\[
|\overlap|^2 = \Tr \overlap^\dagger \overlap = \sum_{r,s} \overlap_{rs}^2 \, .
\]
When $\tau$ and $\sigma$ are uncorrelated and $\overlap = \J/k$, we have $q=1/k$ as in the previous section.  When $\sigma = \tau$ and $\overlap = \1$, we have $q = \cin/(k\avgdeg) = (1+(k-1)\lambda)/k$.

For $\tau$ to be good, we need $|\ein - \mein| < n^{2/3}$.  Since $|m-\mm| < n^{2/3}$ as well, up to subexponential terms the probability that $\tau$ is good is 
\[
\Pr[\Bin(\mm,q) = \mein] = {\mm \choose \mein} q^\mein (1-q)^{\meout} \, . 
\]
The rate at which this occurs is again a Kullback-Leibler divergence, between binomial distributions with success probabilities $q$ and $\mein/\mm = \cin/(k \avgdeg)$.  Following our previous calculations gives 
\begin{align}
&\lim_{n \to \infty} \frac{1}{n} \log \Pr[\Bin(\mm,q) = \mein] 
\label{eq:tau-good} \\
&= -\frac{\avgdeg}{2} \left(
\frac{\cin}{k \avgdeg} \log \frac{\cin}{qk\avgdeg} + \left( 1 - \frac{\cin}{k \avgdeg} \right) \log \frac{1-\cin/k\avgdeg}{1-q} 
\right) 
\nonumber \\
&= -\frac{\avgdeg}{2k} \left[
(1+(k-1)\lambda) \log \frac{1+(k-1) \lambda}{qk}
+ (k-1)(1-\lambda) \log \frac{(k-1)(1-\lambda)}{k(1-q)} 
\right] 
\nonumber \\
&= -\frac{\avgdeg}{2k} \left[
(1+(k-1)\lambda) \log \frac{1+(k-1) \lambda}{1 + (|\overlap|^2-1) \lambda}
+ (k-1)(1-\lambda) \log \frac{(k-1)(1-\lambda)}{k - 1 - (|\overlap|^2-1) \lambda}
\right] 
\, . \nonumber 
\end{align}

We pause to prove a lemma which relates the Frobenius norm to the overlap.  This bound is far from tight except in the extreme cases $\overlap = \J/k$ and $\overlap = \1$, but it lets us derive an explicit lower bound on the overlap of a good partition.  Recall that the overlap $\beta$ is the maximum, over all permutations $\pi$, of $(1/k) \sum_r \overlap_{r,\pi(r)} = (1/k) \Tr \pi \overlap$.  
\begin{lemma}
\label{lem:overlap} 
$|\overlap|^2 \le k\beta$.
\end{lemma}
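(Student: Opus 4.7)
The plan is to reduce the inequality to the Birkhoff--von Neumann theorem, which states that every doubly stochastic matrix can be written as a convex combination of permutation matrices: $\overlap = \sum_i p_i P_{\pi_i}$ with $p_i \ge 0$ and $\sum_i p_i = 1$. This decomposition meshes naturally with the definition of $\beta$, which is itself a maximum over permutations.

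The key computation is to substitute the Birkhoff decomposition into just one of the two factors of $\overlap$ in $|\overlap|^2 = \sum_{r,s} \overlap_{rs}^2$, and then collect terms. Since $(P_{\pi_i})_{rs} = 1$ exactly when $s = \pi_i(r)$, I expect
\[
|\overlap|^2 = \sum_{r,s} \overlap_{rs} \sum_i p_i (P_{\pi_i})_{rs} = \sum_i p_i \sum_r \overlap_{r,\pi_i(r)}.
\]
Each inner sum $\sum_r \overlap_{r,\pi_i(r)}$ is at most $\max_\pi \sum_r \overlap_{r,\pi(r)} = k\beta$ by the definition of $\beta$, so the whole expression is bounded by $\bigl(\sum_i p_i\bigr) \cdot k\beta = k\beta$.

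There is no serious obstacle here; the only cosmetic issue is matching the convention for the permutation matrix in $\Tr \pi \overlap$ with the one used in the Birkhoff decomposition, which can be handled by replacing $\pi$ with its inverse where needed. Note also that the naive bound $\sum_s \overlap_{rs}^2 \le \max_s \overlap_{rs}$ (summed over $r$) does not directly give what we want, because the row-wise argmaxes need not form a permutation; the virtue of the Birkhoff decomposition is that it averages over a distribution of permutations that is compatible with $\overlap$. As a sanity check, the bound is tight at the two extremes $\overlap = \1$ ($|\overlap|^2 = k$, $\beta = 1$) and $\overlap = \J/k$ ($|\overlap|^2 = 1$, $\beta = 1/k$), matching the remark preceding the lemma.
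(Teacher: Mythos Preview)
Your proposal is correct and essentially identical to the paper's own proof: both apply the Birkhoff--von Neumann decomposition to one factor of $\overlap$ in $|\overlap|^2 = \Tr \overlap^\dagger \overlap$, rewrite the result as a convex combination of terms of the form $\Tr \pi^{-1}\overlap = \sum_r \overlap_{r,\pi(r)}$, and bound this average by the maximum $k\beta$. The only cosmetic difference is that the paper works in trace notation throughout while you expand the sums entrywise.
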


\begin{proof}
Since $\overlap$ is doubly stochastic, Birkhoff's theorem tells us it can be expressed as a convex combination of permutation matrices, 
\[
\overlap = \sum_\pi a_\pi \pi 
\quad \text{where} \quad
\sum_\pi a_\pi = 1 \, . 
\]
Thus 
\begin{align}
	|\overlap|^2 
	= \Tr \overlap^{\dagger} \overlap 
	= \Tr \left( \sum_\pi a_\pi \pi^{-1} \right) \overlap 
	= \sum_\pi a_\pi \Tr \pi^{-1} \overlap 
	\le \max_\pi \Tr \pi^{-1} \overlap 
	= k\beta \, .
\end{align}
\end{proof}

The function in~\eqref{eq:tau-good} is an increasing function of $\lambda$, since as $\lambda$ increases the distributions $\Bin[\mm,q]$ and $\Bin[\mm,\cin/(k\avgdeg)]$ become closer in Kullback-Leibler distance.  Thus if $\tau$ has overlap $\beta$,  Lemma~\ref{lem:overlap} implies 
\begin{align}
\label{eq:tau-good-2}
\lim_{n \to \infty} &\frac{1}{n} \Pr[\mbox{$\tau$ is good}] 
\nonumber \\
&\le -\frac{\avgdeg}{2k} \left[
(1+(k-1)\lambda) \log \frac{1+(k-1) \lambda}{1 + (k\beta-1) \lambda}
+ (k-1)(1-\lambda) \log \frac{(k-1)(1-\lambda)}{k - 1 - (k\beta-1) \lambda}
\right] \, . 
\end{align}

For fixed $\sigma$, the number of balanced partitions $\tau$ with overlap matrix $\overlap$ is the number of ways to partition each group $\sigma^{-1}(r)$ so that there are $\overlap_{rs} n/k$ vertices in $\sigma^{-1}(r) \cap \tau^{-1}(s)$:
\[
\prod_{r=1}^k {n/k \choose \{ \overlap_{rs} n / k \mid 1 \le s \le k \} } 
= \prod_{r=1}^k \frac{(n/k)!}{\prod_s (\overlap_{r,s} n/k)!} \le \e^{n H(\overlap)} \, , 
\]
where $H(\overlap)$ is the average entropy of the rows of $\overlap_{rs}/k$, 
\[
H(\overlap) = - \frac{1}{k} \sum_{r,s} \overlap_{rs} \log \overlap_{rs} \, . 
\]
By the union bound, the probability that there are any good partitions with overlap matrix $\overlap$ is exponentially small whenever the sum of $H(\overlap)$ and the right-hand side of~\eqref{eq:tau-good-2} is negative.  For a fixed overlap $\beta$, maximized by the permutation $\pi$, the entropy $H(\overlap)$ is maximized when 
\[
\overlap_{rs} = \begin{cases}
\beta & \mbox{if $s=\pi(r)$} \\
(1-\beta)/(k-1) & \mbox{if $s \ne \pi(r)$} \, , 
\end{cases}
\]
so we have 
\begin{equation}
\label{eq:entropy-bound}
H(\overlap) \le h(\beta) + (1-\beta) \log (k-1) \, . 
\end{equation}
Combining the bounds~\eqref{eq:tau-good-2} and~\eqref{eq:entropy-bound}, and requiring that their sum is at least zero, completes the proof.
\end{proof}


\subsection{Detection below the Kesten-Stigum bound}

In \S\ref{sec:intro} we commented on the asymptotic behavior of $\dcondupper$ in various regimes.  In Table~\ref{tab:1} we give, for various values of $k$, the point $\lambda^*$ at which $\dcondupper = 1/\lambda^2$; then $\dcondupper < 1/\lambda^2$ for $\lambda < \lambda^*$.  As stated above, in the limit $k \to \infty$ we have $\dcondupper = 2/\lambda$, so $\lambda^*$ tends to $1/2$.

\begin{table}
\begin{center}
$
\begin{array}{|c|c|c|c|c|c|c|c|c|c|c|c|}
\hline
k & 5 & 6 & 7 & 8 & 9 & 10 & 11 & 20 & 100 & 1000 & 10^4 \\
\lambda^* 
& -0.239
& -0.166
& -0.112
& -0.070
& -0.036
& -0.08 
& 0.014 
& 0.127
& 0.286
& 0.372
& 0.410
\\
\hline
\end{array}
$
\end{center}
\bigskip
\caption{For $\lambda < \lambda^*$ we have $\dcondupper < 1/\lambda^2$, so that community detection is information-theoretically possible below the Kesten-Stigum bound.  For $k \ge 5$, this holds in the sufficiently disassortative case, including planted graph coloring where $\lambda = -1/(k-1)$.  For $k \ge 11$, it occurs throughout the disassortative range $\lambda < 0$, and in some assortative cases.}
\label{tab:1}
\end{table}


\section{The Lower Bound}
\label{sec:second}

In this section we derive a lower bound $\dcondlower$ on the information-theoretic transition, using a sufficient condition established by Neeman and Netrapalli~\cite{neeman-netrapalli}.  They generalize the small-subgraph conditioning argument of Mossel, Neeman, and Sly~\cite{MNS12} to show that both contiguity and non-reconstruction follow whenever a second moment bound holds for the ratio $\P/\Q$ between the block model and the \ER\ graph.  Specifically, they show that this bound implies two separate results.  First, the \ER\ and block model distributions are mutually contiguous---any event that holds with high probability in one model holds with high probability in the other, so there is no statistical test capable of distinguishing between these models from a single sample.  Second, the planted partition in the block model becomes non-reconstructible---no algorithm for determining the ground truth group labels performs better than chance. 

Using the Laplace method, this second moment bound holds if a certain $\Phi$ of doubly stochastic matrices (parametrized by $k,\avgdeg$, and $\lambda$) is maximized by its value at the flat matrix $\J/k$.  For completeness, and since our notation is rather different from theirs, we derive this function here for the case~\eqref{eq:c-cin-cout} of the block model.  The function $\Phi$ is a combination of an entropy term $H(\overlap)$, which is maximized at $\J/k$, and a correlation or ``energy'' term which is maximized at $\overlap = \1$.  This kind of maximization problem was studied extensively by Achlioptas and Naor~\cite{achlioptas-naor} on the way to proving their lower bound on the $k$-colorability threshold, allowing us to relate this problem to theirs.

\subsection{The second moment, contiguity, and non-reconstructibility}

We apply the following theorem of Neeman and Netrapalli, translated into our notation.
\begin{theorem}
\cite[Theorems 3.7 and 3.9]{neeman-netrapalli} Let
	\begin{align}
		\Phi(\overlap) = H(\overlap) - \log k + \frac{\avgdeg \lambda^2}{2}\left(|\overlap|^2 - 1\right) \, ,
		\label{eq:phi}
	\end{align}
where $H(\overlap) = -(1/k) \sum_{r,s} \overlap_{rs} \log \overlap_{rs}$ 
and $|\overlap|$ denotes the Frobenius norm. If $\Phi(\overlap) \le 0$ for all doubly stochastic $\overlap$, then (i) $\P$ and $\Q$ are mutually contiguous, and (ii) $\P$ is non-reconstructible.
\end{theorem}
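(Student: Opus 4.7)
My plan is to establish both (i) mutual contiguity and (ii) non-reconstructibility via the small-subgraph conditioning framework of Robinson-Wormald and Janson, which reduces both claims to a second-moment computation for the likelihood ratio $Z(G) := \P(G)/\Q(G)$ that is controlled exactly by $\Phi$.

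The core calculation is to show $\expected_\Q[Z^2] = \e^{n \max_\overlap \Phi(\overlap) + o(n)}$. Writing $\P(G) = k^{-n}\sum_\sigma \P(G\mid\sigma)$, the ratio $\P(G\mid\sigma)\P(G\mid\sigma')/\Q(G)$ factorizes over vertex pairs since $\Q$ and $\P(\cdot\mid\sigma)$ are product measures; applying the identity $\tfrac{ab}{q} + \tfrac{(1-a)(1-b)}{1-q} = 1 + \tfrac{(a-q)(b-q)}{q(1-q)}$ edge-by-edge and substituting $c_{rs} - \avgdeg = k\avgdeg\lambda(\delta_{rs} - 1/k)$ from $\connectivity = \lambda\1 + (1-\lambda)\J/k$, the contribution of a pair $(\sigma,\sigma')$ with overlap matrix $\overlap$ collapses to $\exp\!\bigl(\tfrac{\avgdeg\lambda^2}{2}\, n (|\overlap|^2 - 1)\bigr)$ plus $o(n)$ corrections. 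The number of balanced $\sigma'$ with prescribed overlap to a fixed $\sigma$ is $\e^{n H(\overlap) + o(n)}$ and the normalization contributes $-\log k$; these terms assemble into exactly $\Phi(\overlap)$, so Laplace's method delivers the claimed asymptotic, which is $O(1)$ under the hypothesis that the maximum equals zero (attained at $\overlap = \J/k$).

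An $O(1)$ second moment only gives the one-sided comparison $\Q \triangleleft \P$, so to upgrade to (i) I would invoke Janson's small-subgraph conditioning theorem. Its input is that for each fixed $\ell \ge 3$ the number of $\ell$-cycles is asymptotically Poisson and jointly independent across $\ell$, with means $\mu_\ell^\Q = \avgdeg^\ell/(2\ell)$ under $\Q$ and $\mu_\ell^\Q\!\cdot(1 + (k-1)\lambda^\ell)$ under $\P$, the excess reflecting $\Tr\connectivity^\ell - 1$; both are obtainable by the method of factorial moments. Janson's theorem then yields mutual contiguity provided $\expected_\Q[Z^2]$ matches the cycle-correction $\prod_\ell (1 + (k-1)\lambda^\ell)$ up to a $1+o(1)$ factor, with the product summable since $\sum_\ell \lambda^{2\ell}/\ell < \infty$ whenever $|\lambda| < 1$. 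For (ii), I would rerun the second-moment computation on the conditional event that $\sigma$ assigns a prescribed value to any fixed finite set of vertices $u_1,\ldots,u_t$. Because the conditioning fixes only $O(1)$ coordinates of $\sigma$, it shifts neither the Laplace maximum nor its value, so the conditional second moment remains $O(1)$; a standard argument then shows $\P(\sigma(u_1)=s_1,\ldots,\sigma(u_t)=s_t \mid G)$ converges in probability to $k^{-t}$, which is exactly non-reconstructibility of finite marginals.

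The main obstacle is the small-subgraph conditioning step: it requires verifying the precise covariance structure between $Z$ and cycle counts that Janson's theorem demands, which in practice means computing joint factorial moments of $(X_3, X_4, \ldots)$ under both measures and checking that their contribution to $\expected_\Q[Z^2]$ matches the predicted cycle-correction product exactly. The variational bound $\Phi \le 0$ controls only the leading Laplace contribution, so some additional bookkeeping is needed to isolate the cycle contribution from the ``smooth'' remainder---this is where the arguments of Neeman and Netrapalli, building on Mossel-Neeman-Sly, do the real work.
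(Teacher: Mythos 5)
The statement is quoted verbatim from Neeman and Netrapalli \cite{neeman-netrapalli}; the paper does not prove it, only rederives the second-moment computation to exhibit where $\Phi$ comes from, and then defers both contiguity and non-reconstructibility to that reference. Your second-moment derivation---the edge-by-edge factorization identity, the substitution $c_{rs}-\avgdeg = k\avgdeg\lambda(\delta_{rs}-1/k)$, the collapse to $\tfrac{\avgdeg\lambda^2}{2}(|\overlap|^2-1)$, and the entropy count $\e^{nH(\overlap)}$---matches the paper's sketch exactly, and your high-level plan (Laplace estimate plus small-subgraph conditioning, conditional second moment for finite marginals) is indeed the route Neeman-Netrapalli take.

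In the part you explicitly defer to Neeman-Netrapalli, however, the small-subgraph conditioning criterion is misstated. Janson's theorem asks that $\expected_\Q[Z^2]/\expected_\Q[Z]^2 \to \exp\!\bigl(\sum_\ell \mu_\ell\delta_\ell^2\bigr)$ where $\mu_\ell=\avgdeg^\ell/(2\ell)$ and $\delta_\ell=(k-1)\lambda^\ell$, not that it match $\prod_\ell(1+(k-1)\lambda^\ell)$. The relevant series is therefore $\sum_\ell (\avgdeg\lambda^2)^\ell/\ell$, whose convergence requires $\avgdeg\lambda^2<1$, the Kesten-Stigum bound---your claim that summability follows ``whenever $|\lambda|<1$'' drops the factor $\avgdeg^\ell$. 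This condition does follow from the hypothesis $\Phi\le 0$, since the Hessian of $\Phi$ at $\J/k$ restricted to the tangent space of doubly stochastic matrices is $\tfrac{\avgdeg\lambda^2-1}{2}$ times the identity, but that link is precisely the second place where $\Phi\le 0$ is used (beyond controlling the Laplace maximum), so the imprecision is not merely cosmetic.
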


For completeness, we show how the function $\Phi$ arises in the second moment of the ratio $\P/\Q$.  For ease of exposition, we will ignore subexponential terms; however, as in~\cite{achlioptas-moore,achlioptas-naor} these cancel out when we take the polynomial factors of the Laplace approximation into account.  
Recall from~\S1 that the probability in the block model of $G$ given a particular planted partition $\sigma$ is 
\begin{align*}
	\P(G\mid\sigma) = \prod_{(u,v) \in E} \left(\frac{c_{\sigma(u),\sigma(v)}}{n}\right) \prod_{(u,v)\notin E}\left(1 - \frac{c_{\sigma(u),\sigma(v)}}{n}\right),
\end{align*}
and the total probability is
\begin{align*}
	\P(G) = \frac{1}{k^n}\sum_{\sigma} \P(G\mid\sigma) \, .
\end{align*}
In contrast, the probability of $G$ in the \ER\ model $G(n,\avgdeg/n)$ is
\begin{align*}
	\Q(G) = \prod_{(u,v)\in E} \frac{\avgdeg}{n} \prod_{(u,v)\notin E}\left(1 - \frac{\avgdeg}{n}\right).
\end{align*} 
Combining these, the square of the ratio $\P/\Q$ for fixed $G$ is
\begin{align*}
	\left( \frac{\P(G)}{\Q(G)} \right)^2 
	%
	&= \frac{1}{k^{2n}} \sum_{\sigma,\tau} \prod_{(u,v)} \left( \indicator{(u,v)\in E}\, \frac{c_{\sigma(u),\sigma(v)} c_{\tau(u),\tau(v)}}{\avgdeg^2} + \indicator{(u,v)\notin E}\,\left(\frac{\left(1 - \frac{c_{\sigma(u),\sigma(v)}}{n}\right)\left(1 - \frac{c_{\tau(u),\tau(v)}}{n}\right)}{\left(1 - \frac{\avgdeg}{n}\right)^2}\right)\right) .
\end{align*}

In the \ER\ distribution $\Q$, edges are independent events with $\expected_{\Q}\indicator{(u,v)\in E} = \avgdeg/n$.  Thus if $G$ is drawn from $\Q$, the second moment of $\P/\Q$ is
\begin{align*}
	\expected_{\Q} \left(\frac{\P(G)}{\Q(G)}\right)^2 
	%
	%
	&= \frac{1}{k^{2n}} \sum_{\sigma,\tau} \prod_{(u,v)} \left(\frac{c_{\sigma(u),\sigma(v)}c_{\tau(u),\tau(v)}}{dn} 
	+ \frac{\left(1 - \frac{c_{\sigma(u),\sigma(v)}}{n}\right)\left(1 - \frac{c_{\tau(u),\tau(v)}}{n}\right)}{1 - \frac{\avgdeg}{n}} 
	\right) \\ 
	&= \frac{1}{k^{2n}} \sum_{\sigma,\tau} \prod_{(u,v)} \left(
	1 + \frac{\avgdeg}{n} 
	\left( \frac{c_{\sigma(u),\sigma(v)}}{\avgdeg} - 1 \right) 
	\left( \frac{c_{\tau(u),\tau(v)}}{\avgdeg} - 1 \right) 
	+ O(1/n^2) 
	\right) \\
	&= \frac{1}{k^{2n}} \sum_{\sigma,\tau} \exp\!\left[ \sum_{(u,v)} \left(
	 \frac{\avgdeg}{n} 
	 \left( \frac{c_{\sigma(u),\sigma(v)}}{\avgdeg} - 1 \right) 
	 \left( \frac{c_{\tau(u),\tau(v)}}{\avgdeg} - 1 \right) 
	+ O(1/n^2) \right) \right] 
	\, . 
	%
\end{align*}
We can rewrite this expression in a helpful way. For each pair of partitions $\sigma, \tau$, let us replace the product over vertices with a product over groups; let us assume that $\sigma$ and $\tau$ are both nearly balanced, so that $\overlap$ is doubly-stochastic.  Since there are $\overlap_{rs} n/k$ vertices in $\sigma^{-1}(r) \cap \tau^{-1}(s)$, we have 
\begin{align}
\label{eq:second-sum}
	\expected_{\Q} \left(\frac{\P(G)}{\Q(G)}\right)^2
	&\approx \frac{1}{k^{2n}} \sum_{\sigma,\tau} \exp\!\left[ 
	\,\frac{\avgdeg n}{2} 
	\sum_{r,s,r',s'} 
	\frac{\overlap_{rr'} \overlap_{ss'}}{k^2}
	 \left( \frac{c_{r,s}}{\avgdeg} - 1 \right) 
	 \left( \frac{c_{r's'}}{\avgdeg} - 1 \right) 
	\right] 
	\, . 
\end{align}
where the factor of $1/2$ avoids double-counting of the unordered pairs $(u,v)$.  

Now recall that $\connectivity = c/(k \avgdeg)$ is doubly stochastic, and that $\connectivity = \lambda \1 + (1-\lambda) \J/k$.  Using $\J \overlap = \overlap \J = \J$, we can write the function in the exponential as 
\begin{align}
\sum_{r,s,r',s'} \overlap_{rr'} \overlap_{ss'} \left( \connectivity_{rs} - 1/k \right) \left( \connectivity_{r's'} - 1/k \right) 
&= \Tr \left[ \overlap^\dagger \left( \connectivity - \J/k \right) \overlap \left( \connectivity - \J/k \right) \right] \nonumber \\
&= \lambda^2 \Tr \left[ \overlap^\dagger \left( \1 - \J/k \right) \overlap \left( \1 - \J/k \right) \right] \nonumber \\
&= \lambda^2 \Tr \left[ \overlap^\dagger \overlap - \J/k \right] \nonumber \\
&= \lambda^2 \left( |\overlap|^2 - 1 \right) \, . 
\label{eq:cute}
\end{align}

Finally, as in other applications of the second moment method, we can approximate the sum over partitions as an integral over all doubly-stochastic overlap matrices $\overlap$, weighted by the number of partition pairs which realize each overlap.  This is the number $k^n$ of partitions $\sigma$, times the number of partitions $\tau$ with overlap matrix $\overlap$.  Following~\S3, this gives a weight $k^n \e^{nH(\overlap)} = \e^{n(H(\overlap)+\log k)}$.  Combining this with~\eqref{eq:second-sum} and~\eqref{eq:cute}, we have (where $\sim$ hides polynomial terms) 
\begin{align}
	\expected_{\Q} \left( \frac{\P}{\Q} \right)^2 
&\sim \frac{1}{k^{2n}} \int d\overlap\, \exp\!\left[ 
n \left( H(\overlap) + \log k + \frac{\avgdeg \lambda^2}{2} \left( |\overlap|^2 - 1 \right) \right) 
\right] \\
&= \int d\overlap\, \exp\!\left[ 
n \left( H(\overlap) - \log k + \frac{\avgdeg \lambda^2}{2} \left( |\overlap|^2 - 1 \right) \right) 
\right] \\
&= \int d\overlap\, \exp\!\left[ n\Phi(\overlap) \right] \, .
\label{eq: second moment scaling}
\end{align}

The integral in~\eqref{eq: second moment scaling} is dominated in the limit $n \to \infty$ by the maximum of $\Phi$, assuming this is a quadratic maximum (i.e., with a Hessian which is negative definite).  In particular, at the flat matrix $\overlap = \J/k$, where $\sigma$ and $\tau$ are roughly independent, we have $H(\overlap) = \log k$, $|\overlap|^2 = 1$, and $\Phi(\overlap) = 0$.  Thus if $\Phi(\overlap) \le 0$ for all $\overlap$, the second moment $\expected_\Q (\P/\Q)^2$ is bounded by a constant.  Using further reasoning, including the small subgraph conditioning method, Neeman and Netrapalli~\cite{neeman-netrapalli} show that this then implies contiguity between the block model and $G(n,\avgdeg/n)$, and moreover that the block model is non-reconstructible.

\subsection{Maximizing $\Phi$}

Achlioptas and Naor, in the process of proving a lower bound on the $k$-coloring threshold for \ER\ graphs, develop substantial machinery for optimizing $\Phi$-like functions over the Birkhoff polytope of doubly stochastic matrices~\cite{achlioptas-naor}. 
Specifically, they relax the problem to maximizing over all row-stochastic matrices, and show that the maximizer is then a mixture of uniform rows and rows where all but one of the entries are identical.  Although their bound is quite general, we quote here their results for the entropy.  (Note that their definition of $H(\overlap)$ and ours differ by a factor of $k$.)

\begin{theorem}
\cite[Theorem 9]{achlioptas-naor} Let $\overlap$ be doubly stochastic with $|\overlap|^2 = \rho$.  Then
\begin{equation}
H(\overlap) \le \max_{ m\in\left[0,\frac{k(k-\rho)}{k-1}\right] } 
\left\{ 
\frac{m}{k} \log k 
+ \left( 1-\frac{m}{k} \right) f\!\left(\frac{k\rho-m}{k(k-m)}\right)\right\} \, ,
\end{equation}
where
\begin{align*}
	f(r) = h\!\left(\frac{1 + \sqrt{(k-1)(kr - 1)}}{k}\right) + (k-1) \,h\!\left(\frac{1 - \frac{1 + \sqrt{(k-1)(kr - 1)}}{k}}{k-1}\right) 
\end{align*}
and $h(x) = -x \log x$. 
\end{theorem}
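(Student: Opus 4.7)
The plan is to follow the approach of Achlioptas and Naor: relax the doubly-stochastic constraint, reduce to a per-row entropy maximization via Lagrangian stationarity, and then handle the global Frobenius budget through a concave-envelope argument.

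First, I would relax the feasible set from doubly stochastic matrices with $|\overlap|^2 = \rho$ to merely row-stochastic matrices with the same Frobenius constraint; this enlarges the feasible set, so any upper bound on the relaxed problem upper-bounds the original. The objective decouples across rows as
\[
H(\overlap) = \frac{1}{k}\sum_{i=1}^k H_{\mathrm{row}}(x^{(i)}),
\]
where $x^{(i)}$ is the $i$th row, $H_{\mathrm{row}}(x) = -\sum_s x_s \log x_s$, and the rows are coupled only through the global Frobenius budget $\sum_i |x^{(i)}|^2 = \rho$.

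Second, the heart of the argument is a single-row lemma: for any probability vector $x \in \R{k}$ with $|x|^2 = q$,
\[
H_{\mathrm{row}}(x) \le f(q),
\]
with equality iff (up to permutation) $x$ has one ``large'' entry $a = (1+\sqrt{(k-1)(kq-1)})/k$ and $k-1$ equal ``small'' entries $(1-a)/(k-1)$. Per-row Lagrangian stationarity $\log x_s + 2\nu x_s = \mathrm{const}$ admits at most two positive roots, so the entropy-maximizing $x$ takes at most two distinct values; a direct comparison of entropies as the number of ``heavy'' coordinates varies then shows that the single-heavy configuration dominates.

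Third, one reduces to the scalar optimization $\max \frac{1}{k}\sum_i f(q_i)$ over $q_i \in [1/k, 1]$ with $\sum_i q_i = \rho$. Because $f$ is not globally concave on $[1/k, 1]$, the supremum on this constraint set is attained on the upper concave envelope of $f$, which takes the form of a chord from the endpoint $(1/k, \log k)$ (the uniform row) to some interior point $(r, f(r))$. Parameterizing the mixture by the effective number $m$ of uniform rows, the remaining $k-m$ rows share the common value $r = (k\rho - m)/(k(k-m))$, and the constraint $r \le 1$ translates into $m \le k(k-\rho)/(k-1)$. Maximizing over $m$ in this range yields the stated bound.

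The principal obstacle is the single-row lemma: Lagrange multipliers quickly reduce to two-valued vectors, but ruling out alternative configurations with several heavy entries requires a genuine comparison of entropies along the one-parameter curve of feasible $(a,b,j)$ triples---this is the crux of Achlioptas and Naor's Theorem 9. A secondary difficulty is justifying the concave-envelope reduction in the third step, which relies on qualitative properties (monotonicity and inflection behavior) of $f$ that follow from the same lemma together with a direct calculation.
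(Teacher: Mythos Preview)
The paper does not prove this statement at all: it is quoted verbatim as Theorem~9 of Achlioptas and Naor and used as a black box. So there is no ``paper's own proof'' to compare against; the authors simply cite the result and move on to apply it.

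Your sketch is, in fact, a reasonable outline of the Achlioptas--Naor argument itself: relax to row-stochastic matrices, show that the per-row entropy maximizer at a fixed squared norm is a two-valued vector with a single heavy coordinate (yielding $f$), and then pass to the concave envelope to handle the allocation of the Frobenius budget across rows, which produces the mixture of $m$ uniform rows and $k-m$ ``spiked'' rows. One caution: the single-row step is more delicate than your summary suggests. The Lagrange condition does give at most two distinct coordinate values, but selecting the correct branch (one heavy vs.\ several heavy) and establishing the inflection/concavity structure of $f$ that justifies the chord-from-$(1/k,\log k)$ envelope are the genuinely technical parts of the Achlioptas--Naor proof, and your sketch defers both. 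If you intend to actually supply a proof rather than a citation, those are the places where real work is required.
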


With this result in hand and using $f(1/k) = k h(1/k) = \log k$, we know that for all $\overlap$ with $|\overlap|^2 = \rho$, 
\begin{align*}
\Phi(\overlap) 
\le 
\left(1 - \frac{m}{k} \right)  \left( f\!\left(\frac{k\rho-m}{k(k-m)}\right) - f(1/k) \right)
+ \frac{\avgdeg \lambda^2}{2}(\rho - 1)
\end{align*}
for $m \in [0,k(k-\rho)/(k-1)]$.  Achlioptas and Naor determined the value of $\avgdeg \lambda^2/2$ for which the right-hand side is less than or equal to zero for all $m$ in this interval and all $\rho \in [1,k]$.  
\begin{lemma}
\cite[Proof of Theorem 7]{achlioptas-naor} 
When $\delta < (k-1) \log (k-1)$,
\begin{align*}
	\frac{\delta(\rho - 1)}{(k-1)^2} \le \left(1 - \frac{m}{k}\right)\left( f(1/k) - f\!\left(\frac{k\rho - m}{k(k-m)}\right)\right)
\end{align*}
for all $m \in [0,k(k-\rho)/(k-1)]$ and all $\rho \in [1,k]$.  
\end{lemma}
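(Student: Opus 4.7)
The claim is equivalent to a uniform lower bound on the ``entropy gap''
$$\phi(m) := \left(1 - \frac{m}{k}\right)\!\left( f(1/k) - f(r(m)) \right), \qquad r(m) = \frac{k\rho - m}{k(k-m)}.$$
My plan is to fix $\rho \in [1,k]$, compute $\phi^*(\rho) := \min_{m \in [0,\,k(k-\rho)/(k-1)]} \phi(m)$, and verify $\phi^*(\rho) \ge \delta(\rho - 1)/(k-1)^2$ whenever $\delta < (k-1)\log(k-1)$. Since both sides vanish at $\rho = 1$, the relevant question is the growth rate of $\phi^*(\rho)$ in $\rho - 1$ uniformly in $m$.

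To tame $f$ I would use the parametrization $x = (1 + \sqrt{(k-1)(kr-1)})/k$ already implicit in the definition of $f$: it gives $f(r) = F(x)$ where $F(x) := h(x) + (k-1)\,h((1-x)/(k-1))$ is the entropy of a ``one-heavy'' distribution on $[k]$ (with the paper's convention $h(y) = -y\log y$). The function $F$ is concave with maximum $F(1/k) = \log k$, and $\log k - F(x)$ is non-negative and monotone on $x \in [1/k, 1]$. Under this change of variable, the $m$-interval corresponds to $x$ running from $(1 + \sqrt{(k-1)(\rho-1)})/k$ (at $m = 0$, where $r = \rho/k$) up to $x = 1$ (at $m = k(k-\rho)/(k-1)$, where $r = 1$). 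The right endpoint alone gives $\phi = ((\rho-1)/(k-1))\log k$, which only yields the weaker bound $\delta \le (k-1)\log k$; hence the tighter threshold must come from the interior of the $m$-interval, not from either endpoint.

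For the interior minimum I would differentiate $\phi$, using the identities $r'(m) = (\rho-1)/(k-m)^2$ and $f'(r) = F'(x)(k-1)/(2(kx-1))$. Setting $\phi'(m) = 0$ produces a critical-point equation relating $m$ and $x$; following the machinery of Achlioptas and Naor, one solves this equation to express the optimizing $m$ as an explicit function of $x$, reducing the two-variable problem to a single-variable inequality in $x$ whose sharp threshold is $\delta = (k-1)\log(k-1)$. I expect the main obstacle to be precisely this reduction. Indeed, a short expansion using $r(m) - 1/k = (\rho-1)/(k-m)$ and $f'(1/k) = -k/2$ shows $\phi(m) = (\rho - 1)/2 + O((\rho-1)^2)$ \emph{essentially uniformly in $m$} as $\rho \to 1$, so the leading term does not distinguish admissible $m$ at all; the sharp constant is therefore dictated by subleading terms and by the geometry of $F$ near $x = 1/k$ and $x = 1$. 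Reproducing this delicate Taylor analysis in full generality is where the real work lies, and it is exactly this step that Achlioptas and Naor carry out in their proof of Theorem 7.
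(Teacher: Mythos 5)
The paper does not actually prove this lemma: it is imported verbatim, with a citation to the proof of Theorem~7 in Achlioptas--Naor, so there is no in-paper argument for you to match. Judged as a reconstruction of that argument, your preliminary reductions all check out. The change of variables $x = (1+\sqrt{(k-1)(kr-1)})/k$ with $f(r) = F(x)$, the computation $r(m)-1/k = (\rho-1)/(k-m)$, the derivative $f'(1/k)=-k/2$ via the quadratic expansion of $F$ about $x=1/k$, the endpoint value $\phi = \frac{\rho-1}{k-1}\log k$ at $m = k(k-\rho)/(k-1)$, and the uniform expansion $\phi(m) = (\rho-1)/2 + O((\rho-1)^2)$ are all correct; the last one is a nice observation, since $(k-1)^2/2 > (k-1)\log(k-1)$ for every $k\ge 2$, confirming that the $\rho\to 1$ regime (like the endpoints) is not where the sharp constant comes from.

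The gap is the one you name yourself: the interior critical-point analysis and the global optimization over $\rho\in[1,k]$ that actually produce the threshold $(k-1)\log(k-1)$ are not carried out. Setting $\phi'(m)=0$, solving for the optimizing $m$ as a function of $\rho$, and then verifying the single-variable inequality over the whole range of $\rho$ is the bulk of Achlioptas and Naor's proof, and without it the lemma is asserted rather than proved. What you have is therefore an accurate and useful road-map of the cited proof rather than a self-contained derivation. That is a perfectly reasonable thing to write in the position the paper itself takes --- deferring to the reference --- but it should be presented as a summary of Achlioptas--Naor's argument, not as an independent proof; and if the goal were to make the lemma self-contained, the missing step is precisely the two-variable optimization you defer.
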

Our lower bound is an immediate corollary of this lemma. Substituting $\delta = d \lambda^2 (k-1)^2 / 2$ and solving for $d$ gives
\begin{align}
	\dcondlower = \frac{2\log(k-1)}{k-1}\frac{1}{\lambda^2} \, .
\end{align}

As we commented in~\S\ref{sec:intro}, this corresponds to the lower bound on the $k$-colorability threshold of $G(n,\avgdeg'/n)$ where $\avgdeg' = 2\delta = \avgdeg \lambda^2 (k-1)^2$, scaling the eigenvalue on each edge to $\lambda$ from its value $-1/(k-1)$ for $k$-cooring.  This fits with the Kesten-Stigum threshold as well, since the amount of information (appropriately defined) transmitted along each edge is proportional to $\lambda^2$~\cite{janson2004robust}.

\section{Conclusions}
\label{sec:conclusion}

We (and, independently, \cite{abbe-sandon-more-groups}) have shown that community detection is information-theoretically possible below the Kesten-Stigum threshold.  However, we have not given any evidence that it is computationally hard.  Of course, we cannot hope to prove this without knowing that $\mathrm{P} \ne \mathrm{NP}$, but we could hope to prove that certain classes of algorithms take exponential time.  In particular, we could show that Monte Carlo algorithms or belief propagation take exponential time to find a good partition, assuming their initial states or messages are uniformly random.  

Physically, we believe this occurs because there is a free energy barrier between a ``paramagnetic'' phase of partitions which are essentially random, and a ``ferromagnetic'' or ``retrieval'' phase which is correlated with the planted partition~\cite{DKMZ11a,DKMZ11b,zhang-moore}.  Proving this seems within reach: rigorous results have been obtained in random constraint satisfaction problems~\cite{achlioptas-coja-oghlan,coja-oghlan-efthymiou} showing that solutions become clustered with $O(n)$ Hamming distance and $O(n)$ energy barriers between them, and that Markov chain Monte Carlo algorithms take exponential time to travel from one cluster to another.  The goal here would be to show in a planted model that Monte Carlo takes exponential time to find the cluster corresponding to the planted solution.  

Finally, both our upper and lower bounds can be improved.  Our upper bound requires that w.h.p.\ all good partitions are correlated with the planted one.  We could obtain better bounds by requiring that this is true w.h.p.\ of \emph{most} good partitions, which would require a lower bound on the typical number of good partitions with large overlap.  Similarly, one can make improvements by focusing on the giant component or the 2-core.  For instance, for $\lambda=1$ (i.e., $\cout = 0$) we have $\dcond = 1$, since for any $\avgdeg > 1$ the graph has w.h.p.\ two giant components, one in each group, while our bounds only give $\dcond \ge 2$.  Progress along these lines was made by~\cite{abbe-sandon-more-groups}, but further improvements seem possible.  

The second moment lower bound could be improved as it was for the $k$-colorability threshold in~\cite{coja-oghlan-vilenchik}.  Indeed, the condensation threshold $\dcond$ for $k$-coloring was determined exactly in~\cite{bapst-condensation-coloring} for sufficiently large $k$.  It is entirely possible that their techniques could work here.  Note that constraint satisfaction problems correspond to zero-temperature models in physics, while the block model with $\cin, \cout \ne 0$ corresponds to a spin system at positive temperature; but some rigorous results have recently been obtained here as well~\cite{bapst-positive-temperature}.

\section*{Acknowledgments.}  This work was supported by the ARO under contract W911NF-12-R-0012 and the John Templeton Foundation.  We are grateful to Emmanuel Abbe, Afonso Bandeira, Amin Coja-Oghlan, Elchanan Mossel, and Joe Neeman for helpful discussions.

\bibliographystyle{alpha}
\bibliography{journals,zp,mark,more}

\end{document}